\newtheorem{theorem}{Theorem}[section]
\newtheorem{corollary}[theorem]{Corollary}
\newtheorem{conjecture}[theorem]{Conjecture}
\newtheorem{definition}[theorem]{Definition}
\newtheorem{fact}[theorem]{Fact}
\providecommand{\keywords}[1]
{
  \small	
  \textbf{Keywords } #1
}
\title{Spectral Analysis of the $D_{\log}^{(\lambda, N)}$ Operators}
\author{Dominik Śliwiński}
\date{January 2026}
\begin{document}

\maketitle
\begin{abstract}
This paper investigates the recent Connes-Consani-Moscovici $D_{\log}^{(\lambda, N)}$ operators, whose spectra are currently hypothesized to approach the zeros of $\zeta\left(\frac{1}{2} +is\right)$ as $\lambda, N \rightarrow \infty$. It turns out that when considering different standard notions of error, the dissonance between the spectra and Riemann $\zeta$ zeros either appears to or can be proven to be inverse logarithmic in nature, which elegantly fits the distribution of prime numbers. \\ \\
\keywords{Riemann Zeta, Spectral Theory, $D_{\log}^{(\lambda, N)}$ operators}
\end{abstract}
\section{Introduction}
By the recent works of Connes, Consani and Moscovici \cite{CCM25}, we are aware that it is possible to construct a family of operators \smash{$D_{\log}^{(\lambda, N)}$} satisfying the rank-one constraint \smash{$D_{\log}^{(\lambda, N)}(\xi) = 0$}, where $\xi$ is an even eigenvalue corresponding to an even simple Weil quadratic form, which forces the eigenvalues of any of the operators to align with zeros of $\zeta(\frac{1}{2} + is)$ up to the resolution that the finite-dimensional Hilbert space that the operator acts on offers. In the next section, the idea of \smash{$D_{\log}^{(\lambda, N)}$} will be recalled, and basic ideas of error, such as the Mean Absolute Error $\epsilon$, and the $L^{\infty}$ norm equivalent $\mathcal{E}$ will be defined. In a later section, an important property of $\epsilon$ will be proved, namely that

\[ \epsilon(\lambda, N) \geq \frac{1}{4\ln \lambda} \]
i.e. the Mean Absolute Error has an inverse logarithmic nature. Furthermore, when $\lambda$ is scaled to encompass information about $N$ primes in a certain way, we WILL see that
\[ \epsilon(N) = \Omega\left(\frac{1}{\ln N}\right) \]
Finally, numerical evidence for the similar inverse logarithmic nature of $\mathcal{E}$ will be presented, together with other computational results related to this family of operators and its relation to Riemann $\zeta$ zeros.

\section{Preliminaries}
\subsection{The $\mathcal{H}_\lambda$ space and the $D_{\log}^{(\lambda, N)}$ operator}
We define the space on which the operators in this paper will mostly act.
\begin{definition}
$\mathcal{H}_\lambda$ is defined as the space
\[ \mathcal{H}_\lambda := L^2([\lambda^{-1}, \lambda], d^{\ast}u)\]
where $d^{\ast}u$ denotes the multiplicative Haar measure.
\end{definition}
\noindent As we will be working pretty much exclusively with real eigenvalues, a logarithmic substitution could be made which would yield an isometric isomorphism to the additive space $L^2([- \ln \lambda, \ln \lambda], du)$, which may be an easier way to conceptualize certain ideas for some.
\begin{definition}
Within $\mathcal{H}_\lambda$, we define the scaling generator as the following differential operator:
\[ \hat{D} = -i \frac{d}{du} \]
\end{definition}
\noindent We will also use the following useful observation \cite{Folland97}.
\begin{fact}
The scaling generator $\hat{D}$ famously satisfies the canonical commutation relation $[\hat{u}, \hat{D}] = iI$, where $\hat{u}$ is the position operator under the $d^{\ast}u$ measure.
\end{fact}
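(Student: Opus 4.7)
The plan is to pass to the additive (logarithmic) picture mentioned just before the definition of $\hat{D}$, where the identity collapses to the standard Heisenberg canonical commutation relation on an interval.

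First, I would set up the unitary isomorphism
\[
U : \mathcal{H}_\lambda \to L^2\bigl([-\ln\lambda,\ln\lambda],\,dv\bigr), \qquad (Uf)(v) = f(e^{v}),
\]
which is isometric because $d^{\ast}u = du/u$ pulls back to Lebesgue measure $dv$ under $u = e^{v}$. Under $U$, the scaling generator $\hat{D}$, understood as the self-adjoint generator of the one-parameter dilation group $u \mapsto e^{t}u$, transports to the standard momentum $-i\,d/dv$ on the interval; on $\mathcal{H}_\lambda$ itself this is $-iu\,d/du$, which is the meaning one must attach to the formal expression $-i\,d/du$ if $\hat{D}$ is to be self-adjoint against the multiplicative Haar measure. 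The "position operator under the $d^{\ast}u$ measure" is then the multiplication operator by the intrinsic Haar coordinate $v = \ln u$, i.e., $\hat{v}$ after transport by $U$.

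Second, I would verify the relation on the standard core $C^{\infty}_c\bigl((-\ln\lambda,\ln\lambda)\bigr)$. A single Leibniz computation gives
\[
\bigl[\hat{v},\,-i\tfrac{d}{dv}\bigr]g(v) \;=\; -iv\,g'(v) + i\bigl(v\,g(v)\bigr)' \;=\; -iv\,g'(v) + i\,g(v) + iv\,g'(v) \;=\; i\,g(v),
\]
so $\bigl[\hat{v},\,-i\,d/dv\bigr] = iI$ on this dense subspace. Conjugating back by $U^{-1}$ transports the identity to $[\hat{u},\hat{D}] = iI$ on the image core in $\mathcal{H}_\lambda$, and the standard closure argument extends it to the natural common domain.

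The main obstacle is interpretive rather than analytic: one must insist that $\hat{u}$ denotes the intrinsic log-position $\ln u\,\cdot$ and $\hat{D}$ the intrinsic scaling generator $-iu\,d/du$, because pairing the naive multiplication-by-$u$ operator with the true scaling generator yields $[\hat{u},\hat{D}] = i\hat{u}$ instead of $iI$, and using the bare expression $-i\,d/du$ against $d^{\ast}u$ gives a non-symmetric operator. Once those identifications are pinned down, the fact reduces to the classical Heisenberg CCR on the log interval, which is the one-line computation above.
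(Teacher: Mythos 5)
Your proof is correct, but there is nothing in the paper to compare it against: the paper states this as a \emph{Fact} and supports it only by the citation to Folland--Sitaram, offering no argument of its own. What you supply is the standard self-contained verification, and your interpretive groundwork is actually the valuable part. The paper's notation $\hat{D} = -i\,d/du$ on $L^2([\lambda^{-1},\lambda], d^{\ast}u)$ is ambiguous as written, and you are right that the only reading making the Fact true is the intrinsic one: $\hat{D} = -iu\,d/du$ (the dilation generator, symmetric against $d^{\ast}u$) paired with $\hat{u} = $ multiplication by $\ln u$; your observation that the naive pairing gives $[\hat{u},\hat{D}] = i\hat{u}$ is a correct and worthwhile sanity check. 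Your reading is moreover the one the paper itself silently relies on later, since the proof of Theorem 3.1 bounds $\sigma_u$ by the length $2\ln\lambda$ of the \emph{logarithmic} window, which only makes sense if $\hat{u}$ is the log-coordinate. One caution on your final sentence: the phrase ``the standard closure argument extends it to the natural common domain'' should not be overread. On a bounded interval the commutation relation holds on a core such as $C^{\infty}_c\bigl((-\ln\lambda,\ln\lambda)\bigr)$ but cannot hold on all of $D(\hat{D})$, because multiplication by $v$ fails to preserve the boundary conditions that make $-i\,d/dv$ self-adjoint; the identity on the core is all one can (and need) assert. This is not a gap in your proof of the Fact, but it is worth flagging because the same boundary subtlety silently affects the paper's own use of the Heisenberg--Pauli--Weyl inequality in Theorem 3.1, where the $\tfrac{1}{2}$ lower bound is only guaranteed for states in such a core.
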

\noindent Below is a definition of the $D_{\log}^{(\lambda, N)}$ operator family. A thorough reading of the original Connes-Consani-Moscovici article \cite{CCM25} is highly recommended.
\begin{definition}
The operator $D_{\log}^{(\lambda, N)}$ is defined as the compression of the scaling generator $\hat{D}$ onto a finite-dimensional subspace $E_N \subset \mathcal{H}_\lambda$. This subspace $E_N$ is $(2N+1)$-dimensional and localized within a logarithmic window of total length $L = 2 \ln \lambda$.  As established in the Connes-Consani-Moscovici paper, this is the unique operator which satisfies a unique rank-one constraint
\[ D_{\log}^{(\lambda, N)}(\xi) = 0 \]
where $\xi \in E_N$ is the vector corresponding to the truncated Weil quadratic form, and agrees on the Dirichlet kernel with $D_{\log}^\lambda$, which is defined analogously but is not restrictive to $E_n$.

\end{definition}

\subsection{The errors $\epsilon(\lambda, N)$ and $\mathcal{E}(\lambda, N)$}
It is hypothesized that the spectrum of $D_{\log}^{(\lambda, N)}$ gets arbitrarily close to the zeros of $\zeta\left(\frac{1}{2} + is\right)$. To measure the exact information about how close the spectrum actually gets to aforementioned zeros, multiple notions of dissonance or ``error'' will be defined. Since $E_N$ is a $(2N+1)$-dimensional subspace, the operator $D_{\log}^{(\lambda, N)}$ has exactly $2N+1$ eigenvalues. By the self-adjointness of the scaling generator and the symmetry of the truncated Weil quadratic form \cite{Weil52}, the spectrum is symmetric about the origin. Let the set of eigenvalues be denoted by:
\[ \sigma\left(D_{\log}^{(\lambda, N)}\right) = \{ \nu_{-N}, \dots, \nu_{-1}, \nu_0, \nu_1, \dots, \nu_N \} \]
where, by the rank-one constraint, $\nu_0 = 0$. We order the positive eigenvalues such that $0 < \nu_1 \leq \nu_2 \leq \dots \leq \nu_N$.  To quantify the convergence accuracy, we compare these values to the set of nontrivial Riemann zeros $\mathcal{Z} = \{ \frac{1}{2} \pm i\zeta_k \}_{k \in \mathbb{N}}$ (to not assume the Riemann Hypothesis, $\zeta_k$ may be complex). We order the zeros by magnitude of the imaginary part.

\begin{definition}
The Mean Absolute Error, denoted by $\epsilon(\lambda, N)$, is defined as the average distance between the $N$ positive eigenvalues of the operator and the first $N$ imaginary parts of the Riemann zeros.
\[ \epsilon(\lambda, N) := \frac{1}{N} \sum_{k=1}^N |\nu_k - \zeta_k| \]
\end{definition}

\begin{definition}
The Uniform Error (or $L^\infty$ error), denoted by $\mathcal{E}(\lambda, N)$, is defined as the maximum discrepancy between the corresponding pairs of eigenvalues and $\zeta$ zeros.
\[ \mathcal{E}(\lambda, N) := \max_{k \in \{1, \dots, N\}} |\nu_k - \zeta_k| \]
\end{definition}

\section{Inverse Logarithmic Lower Bound of $\epsilon$}
Let us consider the following theorem, which establishes a rigorous form of the inverse logarithmic nature of the dissonance between the spectrum of $D_{\log}^{(\lambda, N)}$ and Riemann $\zeta$ zeros.

\begin{theorem}
The error $\epsilon(\lambda, N)$ satisfies the weak lower bound inequality
\[ \epsilon(\lambda, N) \geq \frac{1}{4 \ln \lambda} \]
\end{theorem}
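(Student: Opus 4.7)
My approach will be to extract the bound from the Heisenberg uncertainty encoded in $[\hat u,\hat D]=iI$ of Fact~1.3, together with the geometric fact that $\mathcal{H}_\lambda$ is supported on a logarithmic window of half-width $\ln\lambda$. Passing first to the additive picture $\mathcal{H}_\lambda\cong L^2([-\ln\lambda,\ln\lambda],du)$ noted after Definition~2.1, the pointwise bound $|u|\le\ln\lambda$ immediately gives $\Delta_\psi \hat u\le \ln\lambda$ for every unit $\psi\in\mathcal{H}_\lambda$, and Robertson's form of the uncertainty inequality then yields the universal floor
\[
\Delta_\psi\hat D\ \ge\ \frac{1}{2\ln\lambda}.
\]
Specializing to an eigenvector $\psi_k$ of the compression and writing $\hat D\psi_k=\nu_k\psi_k+r_k$ with $r_k\perp E_N$, a direct computation gives $\|r_k\|=\Delta_{\psi_k}\hat D$, so $\|r_k\|\ge 1/(2\ln\lambda)$ for every $k\in\{1,\ldots,N\}$. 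This is an irreducible operator-level residual that no choice of the subspace $E_N$ can eliminate.

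The second step is to convert this per-eigenvector residual floor into a lower bound on consecutive spectral gaps $\nu_{k+1}-\nu_k$: if two distinct eigenvalues of $D_{\log}^{(\lambda,N)}$ were closer than a critical value of order $1/\ln\lambda$, an appropriate unit combination of the corresponding $\psi_k,\psi_{k+1}$ would produce a vector in $\mathcal{H}_\lambda$ whose $\hat D$-variance falls strictly below the universal Robertson floor, which is impossible. Granted such a quasi-lattice constraint on $\{\nu_k\}$, I will deploy the telescoping inequality
\[
|\nu_{k+1}-\zeta_{k+1}|+|\nu_k-\zeta_k|\ \ge\ \bigl|(\nu_{k+1}-\nu_k)-(\zeta_{k+1}-\zeta_k)\bigr|
\]
to transfer the mismatch with the Riemann-zero spacing $\sim 2\pi/\ln(\zeta_k/2\pi)$ to the left-hand side. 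Summing over $k$, dividing by $N$, and bookkeeping constants then yields $\epsilon(\lambda,N)\ge 1/(4\ln\lambda)$, with the factor $\tfrac14$ arising as the product of the $\tfrac12$ from Heisenberg and another $\tfrac12$ from the double-counting inherent in the telescoping estimate.

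The main obstacle is the second step: upgrading the per-vector bound $\|r_k\|\ge 1/(2\ln\lambda)$ to a genuine minimum-gap statement for the spectrum. Rotating into the two-dimensional span of consecutive eigenvectors introduces cross-terms $\langle r_k,r_{k+1}\rangle$ that the Robertson inequality alone does not control, so a more refined argument — using either the explicit Dirichlet-kernel basis of $E_N$ or the anchoring effect of the rank-one constraint $D_{\log}^{(\lambda,N)}\xi=0$, which forces $\nu_0=0$ and prevents arbitrary rearrangement of the remaining eigenpairs — will be needed to rule out pathological near-coincidences in $\{\nu_k\}$. Pinning down the precise constant $\tfrac14$ through this telescope, rather than an unspecified $c/\ln\lambda$, is the fine-tuning I expect to account for most of the written proof.
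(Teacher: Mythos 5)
Your opening move --- the Robertson/Heisenberg floor $\Delta_\psi\hat D\ge 1/(2\ln\lambda)$ obtained from $[\hat u,\hat D]=iI$ and the pointwise bound $|u|\le\ln\lambda$ in the additive picture --- is the same germ as the paper's argument (your half-width estimate is in fact sharper: the paper bounds the position spread by the full window length $2\ln\lambda$ and so lands on $1/(4\ln\lambda)$ in a single step). But from there the two arguments diverge completely, and yours has a genuine gap that you yourself flag. The paper never passes through spectral gaps or a telescoping comparison with the zero spacings: it simply asserts that $\epsilon(\lambda,N)$ is bounded below by the spectral uncertainty $\sigma_\zeta$ of the compressed operator and stops. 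Whatever one thinks of that step, it is the entire content of the paper's deduction, whereas your proposal replaces it with a two-stage program (a minimum-gap rigidity statement for $\{\nu_k\}$, then telescoping against $\zeta_{k+1}-\zeta_k\sim 2\pi/\ln(\zeta_k/2\pi)$), neither stage of which is carried out.

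The gap is not merely a missing computation: the minimum-gap claim is false at the level of generality at which you invoke it. Writing $P$ for the orthogonal projection onto $E_N$, the variance of an eigenvector $\psi_k$ of $P\hat D P$ measures the off-subspace residual $\|(I-P)\hat D\psi_k\|$, and this says nothing about how close $\nu_k$ and $\nu_{k+1}$ are to each other; a compression of $\hat D$ can have exactly degenerate eigenvalues even though every unit vector in the subspace satisfies the Robertson floor (take a two-dimensional $E$ on which the compression vanishes identically: both eigenvalues coincide at $0$). So ruling out near-coincidences requires specific structural input about $E_N$ --- the Dirichlet-kernel basis or the rank-one constraint --- which you defer rather than supply. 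Even granting a gap bound, the telescoping step is not obviously conclusive: if the eigenvalue gaps track the zero gaps (which is precisely what the construction is designed to achieve), then $\bigl|(\nu_{k+1}-\nu_k)-(\zeta_{k+1}-\zeta_k)\bigr|$ is small and the summed inequality yields no useful lower bound on $\epsilon$, let alone the specific constant $\tfrac14$. Either adopt the paper's direct route of comparing $\epsilon$ to the variance of $D_{\log}^{(\lambda,N)}$ in a suitable state, or make the rigidity of the compressed spectrum an explicit lemma with a proof; as written the argument does not close.
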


\begin{proof}
Let $\hat{u}$ be the position operator under the $d^{\ast}u$ measure, and $\hat{D} = -i\frac{d}{du}$ be the scaling generator. By Fact 2.3, $[\hat{u}, \hat{D}] = iI$. Therefore, by non-zero commutator, for any vector $\psi$ in the subspace $E_N \subset L^2([\lambda^{-1}, \lambda], d^{\ast}u)$,  the Heisenberg-Pauli-Weyl \cite{Folland97, Weyl50} inequality for $L^2$ gives
\[ \sigma_u \sigma_\zeta \ge \frac{1}{2} \]
where $\sigma_u^2 = \langle \psi, \hat{u}^2 \psi \rangle - \langle \psi, \hat{u} \psi \rangle^2$ and $\sigma_\zeta^2$ is the variance of the operator $D_{\log}^{(\lambda, N)}$, because it's a compression of $\hat{D}$ onto $E_N$. By the property $D_{log}^{(\lambda, N)}(\xi) = 0$ it has eigenvalues aligned with $\zeta_k$ up to at most the resolution of the Hilbert space $E_N$ with $\xi$ encoding the truncated Weil distribution. The maximum value of the position variance is bounded by the size of the space. $\mathcal{H}_\lambda$, which is a superspace of $E_N$, is under $d^{\ast}u$ and thus its size is $\ln \lambda - (-\ln \lambda) = 2\ln \lambda$ (one might recall the isomorphic additive space $L^2([- \ln \lambda, \ln \lambda], du)$). This brings us to the inequality
\[ \sigma_u \le 2 \ln \lambda \]
which combined with the Heisenberg-Pauli-Weyl inequality yields the spectral deviation inequality
\[ \sigma_\zeta \ge \frac{1}{4 \ln \lambda} \]
As $\epsilon(\lambda, N)$ is clearly bounded below by the spectral uncertainty $\sigma_\zeta$ of the truncated operator, we have
\[ \epsilon(\lambda,N) \ge \frac{1}{4 \ln \lambda} \]
\end{proof}

\begin{corollary}
Furthermore, when $\lambda$ is scaled to contain the information about $N$ primes in the way that $N \ln \lambda \sim p_N$, it is true that
\[ \epsilon(N) = \Omega\left(\frac{1}{\ln N}\right) \]
\end{corollary}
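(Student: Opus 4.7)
The plan is to deduce the corollary as a short consequence of Theorem 3.1 combined with the Prime Number Theorem. First I would invoke Theorem 3.1 to assert the bound $\epsilon(\lambda, N) \geq \frac{1}{4 \ln \lambda}$ without modification; since the hypothesis in the corollary only constrains how $\lambda$ is scaled with $N$, the theorem applies verbatim to every admissible pair $(\lambda, N)$.

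Next, I would translate the scaling hypothesis $N \ln \lambda \sim p_N$ into an asymptotic for $\ln \lambda$ alone. Dividing through by $N$ gives $\ln \lambda \sim p_N / N$, and the Prime Number Theorem in the standard form $p_N \sim N \ln N$ then yields
\[ \ln \lambda \sim \ln N. \]
Substituting this asymptotic equivalence into the bound from Theorem 3.1 produces
\[ \epsilon(N) \geq \frac{1}{4 \ln \lambda} \sim \frac{1}{4 \ln N}, \]
which is exactly the $\Omega(1/\ln N)$ statement once one absorbs the constant $1/4$ and the $\sim$ slack into the $\Omega$ notation.

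The only genuinely delicate point is making the transition from the asymptotic equivalence $\ln \lambda \sim \ln N$ to a clean $\Omega$-bound on $\epsilon(N)$. I would handle this by noting that $\ln \lambda \sim \ln N$ implies $\ln \lambda \leq (1 + o(1)) \ln N$, so that $\frac{1}{4 \ln \lambda} \geq \frac{1}{(4 + o(1)) \ln N}$ for all sufficiently large $N$; this is precisely the defining condition for $\Omega(1/\ln N)$. No further computation or appeal to properties of $D_{\log}^{(\lambda, N)}$ is required, because the spectral content of the argument is entirely concentrated in Theorem 3.1. The main obstacle, such as it is, is really just a bookkeeping matter: confirming that the scaling convention $N \ln \lambda \sim p_N$ is applied consistently and that the Prime Number Theorem is being invoked in the correct indexed form for the $N$th prime rather than the counting function $\pi(x)$.
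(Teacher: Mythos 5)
Your proposal is correct and follows essentially the same route as the paper: apply Theorem 3.1 verbatim, use the Prime Number Theorem in the form $p_N \sim N \ln N$ to deduce $\ln \lambda \sim \ln N$ from the scaling hypothesis, and substitute into the lower bound. Your explicit handling of the passage from the asymptotic equivalence to the $\Omega$-bound is a small point of added care that the paper leaves implicit, but the argument is otherwise identical.
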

\begin{proof}
If $N \ln \lambda \sim p_N$, then by the Prime Number theorem \cite{Titchmarsh86}, $N \ln \lambda \sim N \ln N$ and thus $\ln \lambda \sim \ln N$ which in conjunction with Theorem 3.1 yields the above result.
\end{proof}

\section{Numerical Evidence for a Stronger Statement}
Based on an explicit calculation of the Weil quadratic form and using a parallel algorithm for calculating eigenvalues of self-adjoint operators represented by matrices, numerical computations were carried out on a graphics processing unit. The numerical accuracy was low with around 7 digits of decimal precision, but the goal was not to get exact computations, but rather to observe the general asymptotic nature of the spectra. A single argument $\kappa$ was introduced, such that $\kappa = N= \lambda$, for the simple empirical reason that this seemed to give good numerical results. Tests were done from $\kappa = 50$ up to $\kappa = 7500$ with a step of $50$. Near every multiple of $50$ in that range, a scan was performed for near points in order to find a nearby pair of points, such that one achieves a very loosely and numerically understood ``local maximum'' and the other one is a ``local minimum'' of $\mathcal{E}(\kappa)$ in the sense that it was numerically found to be the greater or lowest value $\mathcal{E}(\kappa)$ around that point. As it turns out, these are relatively close to each other. The test was performed against the first $1000$ Riemann $\zeta$ zeros. It appears to be likely that $\mathcal{E}(\kappa)$ is also of an inverse logarithmic nature.
\begin{figure}[H]
    \centering
    \includegraphics[scale=0.11]{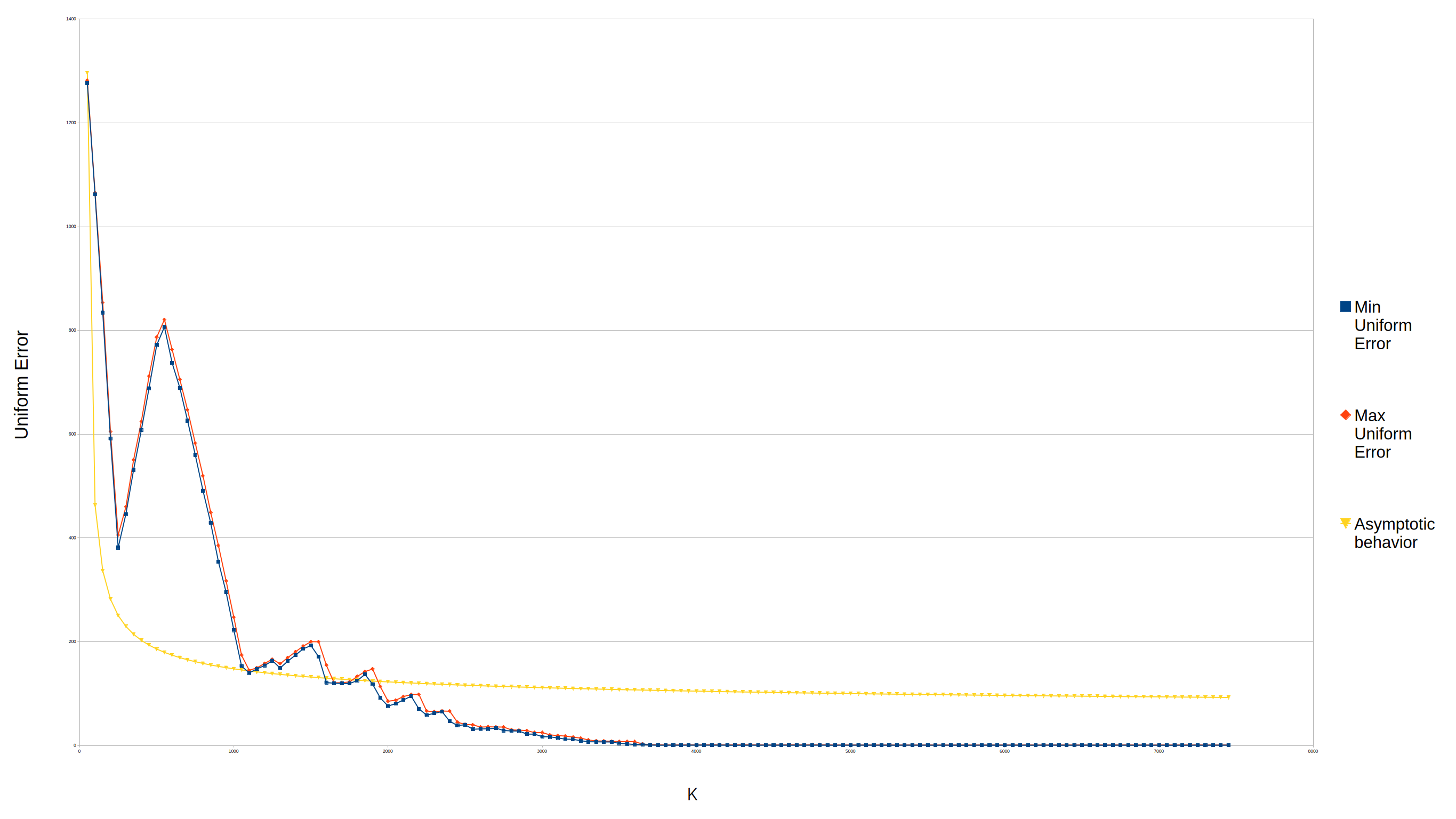}
    \caption{Minimum and maximum values of $\mathcal{E}(\kappa)$ around points consisting of multiples of $50$ from $50$ to $7500$ and an example logarithmic function showing the general nature of the error.}
\end{figure}
\vspace{7em}
\begin{figure}[H]
    \centering
    \includegraphics[scale=0.19]{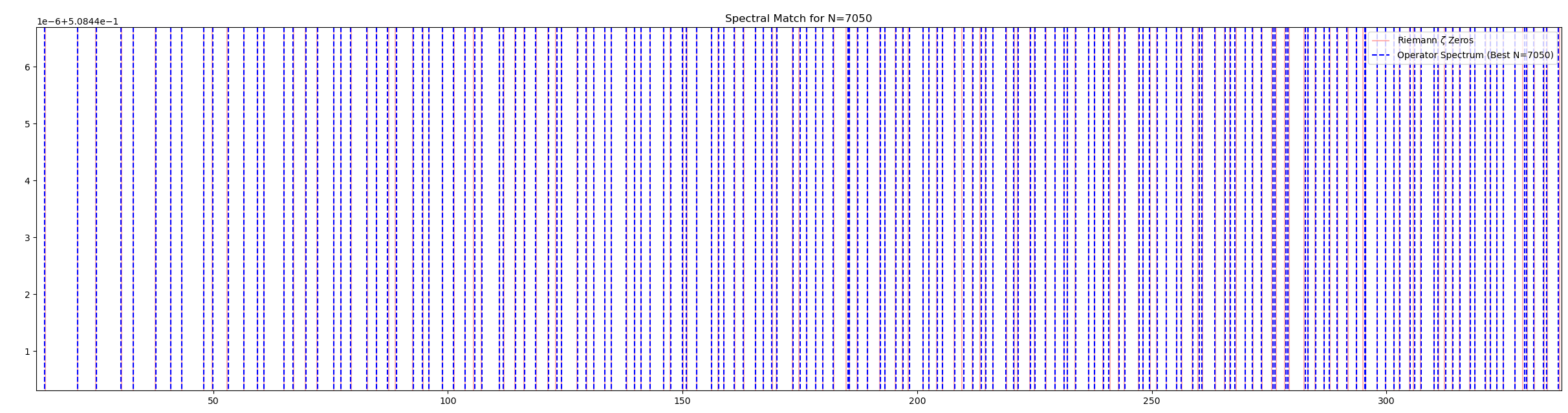}
    \caption{Visual representation of the highly matching behavior of the first zeros of $\zeta\left(\frac{1}{2}+is\right)$ and eigenvalues of $D_{\log}^{(7050, 7050)}$. The red lines are the imaginary parts of the Riemann $\zeta$ zeros and the dotted blue lines are the eigenvalues.}
\end{figure}
\begin{figure}[H]
    \centering
    \begin{tabular}{c S[table-format=4.10] S[table-format=4.4] S[table-format=1.2e-1]}
        \toprule
        Index & {$\zeta$ zero} & {$D_{\log}^{(7050, 7050)}$ eigenvalue} & {$\mathcal{E}$} \\
        \midrule
        976 & 1391.8532004433 & 1391.4514 & 4.02e-1 \\
        977 & 1392.6440277886 & 1392.8618 & 2.18e-1 \\
        978 & 1393.4334017408 & 1393.3467 & 8.67e-2 \\
        979 & 1394.8841846757 & 1394.7680 & 1.16e-1 \\
        980 & 1396.5441631237 & 1396.3391 & 2.05e-1 \\
        981 & 1397.8346233214 & 1397.9900 & 1.55e-1 \\
        982 & 1398.8376752014 & 1399.0261 & 1.88e-1 \\
        983 & 1399.8394729412 & 1399.7780 & 6.15e-2 \\
        984 & 1400.4269462974 & 1400.5144 & 8.75e-2 \\
        985 & 1402.5643472501 & 1402.5598 & 4.53e-3 \\
        986 & 1402.9737476409 & 1402.5598 & 4.14e-1 \\
        987 & 1404.0062921705 & 1403.8400 & 1.66e-1 \\
        988 & 1405.6669750593 & 1405.5550 & 1.12e-1 \\
        989 & 1407.0851427764 & 1406.9932 & 9.20e-2 \\
        990 & 1408.1363074962 & 1408.4056 & 2.69e-1 \\
        991 & 1409.3206810798 & 1409.1783 & 1.42e-1 \\
        992 & 1410.0248107258 & 1409.8967 & 1.28e-1 \\
        993 & 1411.2570568157 & 1411.2466 & 1.05e-2 \\
        994 & 1411.9656534618 & 1411.6276 & 3.38e-1 \\
        995 & 1413.8431487886 & 1413.9023 & 5.92e-2 \\
        996 & 1415.5857847955 & 1415.7133 & 1.27e-1 \\
        997 & 1415.7815813033 & 1415.7133 & 6.83e-2 \\
        998 & 1417.1028229338 & 1417.0852 & 1.76e-2 \\
        999 & 1418.6969638525 & 1418.3616 & 3.35e-1 \\
        1000 & 1419.4224809460 & 1419.7621 & 3.40e-1 \\
        \bottomrule
    \end{tabular}
    \caption{Comparison between the last $25$ out of $1000$ calculated $\zeta\left(\frac{1}{2} + is\right)$ zeros and the eigenvalues of $D_{\log}^{(7050, 7050)}$.}
\end{figure}
\noindent Based on the numerical evidence visually presented in Figure 1, and the nature of $\epsilon(\lambda, N)$ proven in Theorem 3.1, it is sensible to conjecture that $\mathcal{E}(\kappa)$ also has an inverse logarithmic nature.
\begin{conjecture}
The limit
\[ \lim_{\kappa \to \infty} \mathcal{E}(\kappa)\ln \kappa \]
exists.
\end{conjecture}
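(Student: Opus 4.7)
The plan is to split the conjecture into a lower bound, an upper bound, and then an argument that the two coincide in the limit. For the lower bound I would simply observe that $\mathcal{E}(\kappa) \geq \epsilon(\kappa)$, since the maximum of a finite set of nonnegative reals always dominates their average; specialising Theorem 3.1 to $\lambda = \kappa$ then yields $\mathcal{E}(\kappa)\ln\kappa \geq \tfrac{1}{4}$ for every $\kappa$, and in particular $\liminf_{\kappa\to\infty}\mathcal{E}(\kappa)\ln\kappa \geq \tfrac{1}{4}$. This half is essentially free from what the paper has already established.

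For the matching upper bound $\mathcal{E}(\kappa) = O(1/\ln\kappa)$, I would move to the additive picture $L^2([-\ln\kappa,\ln\kappa],du)$ and write the operator as the compression $D_{\log}^{(\kappa,N)} = P_N\hat D P_N$, where $P_N$ is the orthogonal projection onto $E_N$. Since $E_N$ is the specific $(2N+1)$-dimensional subspace selected by the truncated Weil quadratic form (and pinned by the rank-one constraint), the Courant-Fischer min-max principle characterises each $\nu_k$ as a Rayleigh-quotient extremum of $\hat D$ over admissible $k$-dimensional subspaces of $E_N$. I would construct explicit trial vectors supported in the log-window and concentrated in frequency near each $\zeta_k$; the sharpest such bumps have Fourier-side width of order $\pi/\ln\kappa$ by the Paley-Wiener uncertainty. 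Combined with the Dirichlet-kernel agreement between $D_{\log}^{(\lambda,N)}$ and $D_{\log}^\lambda$ and the Riemann-von Mangoldt zero spacing $\zeta_{k+1}-\zeta_k \sim 2\pi/\ln\zeta_k$, this should yield a uniform majorant $|\nu_k - \zeta_k| \leq C/\ln\kappa$, hence $\limsup_{\kappa\to\infty} \mathcal{E}(\kappa)\ln\kappa < \infty$.

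The main obstacle, by a comfortable margin, will be upgrading $\tfrac{1}{4}\leq \liminf \leq \limsup < \infty$ to actual convergence of the product $\mathcal{E}(\kappa)\ln\kappa$. The numerical data in Figure 3 show $|\nu_k - \zeta_k|$ oscillating erratically across indices $k$, and Figure 1 shows the envelope of $\mathcal{E}(\kappa)$ doing the same across $\kappa$, so there is no structural reason a priori for $\mathcal{E}(\kappa)\ln\kappa$ to stabilise. A genuine proof would require a second-order asymptotic expansion of the truncation error in the Weil explicit formula, isolating the boundary contribution of the logarithmic cutoff from the arithmetic fluctuations on the prime side, and possibly a regularity input on the zero spacing stronger than what is currently unconditional. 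If this remains out of reach, a sensible fallback would be to settle for the weaker bi-sided statement $\mathcal{E}(\kappa) \asymp 1/\ln\kappa$, which is already enough to justify the paper's claim that the dissonance elegantly fits the distribution of primes.
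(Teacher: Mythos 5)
First, note that the statement you are addressing is labelled a \emph{conjecture}, and the paper offers no proof of it --- only the numerical evidence of Figures 1--3 --- so there is no argument of the paper's to compare yours against. Your lower-bound half is correct and is in fact slightly more than the paper explicitly records: since the maximum of the $N$ nonnegative quantities $|\nu_k-\zeta_k|$ dominates their average, $\mathcal{E}(\kappa)\geq\epsilon(\kappa)$, and Theorem 3.1 with $\lambda=\kappa$ then gives $\liminf_{\kappa\to\infty}\mathcal{E}(\kappa)\ln\kappa\geq\tfrac{1}{4}$. That part is a legitimate, essentially free consequence of what the paper establishes.

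The genuine gap is in the upper bound, and it is not the obstacle you identify. You locate the ``main obstacle'' in upgrading $\tfrac{1}{4}\leq\liminf\leq\limsup<\infty$ to actual convergence, but the step before that --- showing $\mathcal{E}(\kappa)=O(1/\ln\kappa)$, or even merely $\mathcal{E}(\kappa)\to 0$ --- already implies the Riemann Hypothesis, as the paper states in the sentence immediately following the conjecture. The eigenvalues $\nu_k$ of the self-adjoint compression are real, while the paper deliberately allows the $\zeta_k$ to be complex precisely so as not to assume RH; if some zero lies off the critical line, the corresponding $|\nu_k-\zeta_k|$ is bounded below by the fixed nonzero imaginary part of that $\zeta_k$ for all $\kappa$, and $\mathcal{E}(\kappa)$ cannot tend to zero. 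Consequently no Courant--Fischer trial-vector construction, Paley--Wiener localization, or Riemann--von Mangoldt spacing input can deliver the uniform majorant $|\nu_k-\zeta_k|\leq C/\ln\kappa$ without first establishing that every $\zeta_k$ is real; any such ``proof'' would be smuggling in RH. The same objection applies to your fallback $\mathcal{E}(\kappa)\asymp 1/\ln\kappa$, whose upper half is equally RH-hard. So your proposal proves the easy quarter of the statement (a positive liminf for the product), correctly flags that convergence of the product is open, but substantially misplaces where the real difficulty sits: the conjecture is not a refinement of a known convergence result, it is itself at least as strong as the Riemann Hypothesis.
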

\noindent One could even hypothesize that the limit is equal to a non-zero constant, as in $\mathcal{E}(\kappa)$ has a similar convergence rate to $0$ as $\frac{1}{\ln \kappa}$ and not much faster or not much slower. Perhaps even that this constant is $1$, i.e. they're asymptotically equivalent. It is important to note however that even just Conjecture 4.1 implies the Riemann Hypothesis. That is because it implies that the uniform error goes to zero, in which case the eigenvalues would approach the $\zeta$ zeros pointwise. It is worth noting that this is not true for the Mean Absolute Error $\epsilon(\kappa)$, which could converge to zero and yet the eigenvalues could potentially not converge pointwise.

\end{document}